\numberwithin{equation}{section}
\theoremstyle{plain}
\newtheorem{theorem}{Theorem}[section]
\newtheorem{remark}{Remark}[section]
\newtheorem{proposition}{Proposition}[section]
\newtheorem{corollary}{Corollary}[section]
\newcommand{\sign}{\text{ sign}}
\title{Grand Net Spaces and Applications to Integral Operators}
\author[D. Suragan]{Durvudkhan Suragan} 
\address{$^1$Department of Mathematics, Nazarbayev University 53 Kabanbay Batyr Ave, Astana 010000 Kazakhstan.}
\email{durvudkhan.suragan@nu.edu.kz}
\author[M. A. Zaighum]{Muhammad Asad Zaighum}
\address{$^2$Department of Mathematics
Nazarbayev University
53 Kabanbay Batyr Ave, Astana 010000
Kazakhstan.\\
Department of Mathematics and Statistics,
Riphah International University, Islamabad, Pakistan.}
\email{zaighum.asad@nu.edu.kz; asad.zaighum@riphah.edu.pk}
\begin{document}

\begin{abstract}
This paper introduces the concept of grand net spaces, a new framework that provides a unified setting for studying various function spaces. Building on the seminal works of \cite{iwaniec1992integrability} and \cite{nursultanov1998net}, we define grand net spaces and establish their key properties, including embedding results, norm equivalences, and interpolation theorems. We prove that these spaces coincide with grand Lorentz spaces under certain conditions and derive boundedness criteria for integral operators acting on grand net spaces. The latter extends the Nursultanov-Tikhonov theorem established in \cite{nursultanov2011net}. 

\end{abstract}
\subjclass[2020]{42B35, 46E30, 47G10}

\keywords{Grand net space, Grand Lorentz space, Integral operator, Real Interpolation}
\maketitle

\section{Introduction}
During the past three decades, many researchers have extensively studied grand Lebesgue spaces $L^{p)}(\Omega)$ and their generalizations due to their wide-ranging applications in problems related to PDEs.  These spaces extend the scale of classical Lebesgue spaces. The grand Lebesgue spaces were introduced in 1992 by Iwaniec and Sbordone \cite{iwaniec1992integrability} in connection with the study of integrability properties of Jacobian.  Later, Greco et al. \cite{greco} considered a more general variant of these spaces $L^{p),\theta}(\Omega)$ in the study of existence and uniqueness of the solution of $p$-harmonic equation
$$div|\nabla v|^{p-2}\nabla v=div f.$$

The first systematic study of the structural properties of grand Lebesgue spaces was carried out by Fiorenza et al. in \cite {fiorenza2000grand} and \cite{fiorenza2004grand}. In the context of variable exponent Lebesgue spaces, Kokilashvili et al. \cite{KOK-GV} introduced these spaces and established the boundedness of maximal and Calder\'{o}n Zygmund operators. Later, a more general version of these spaces was studied by Anatriello et al. in
\cite{AN-F}. A comprehensive study of the boundedness of integral operators in non--standard function spaces, including grand--type spaces, is presented in monographs \cite{kokilashviliintegral}, \cite{kokilashviliintegral1}, and \cite{kokilashviliintegral2}. For other recent works in these spaces, we also refer to \cite{fiorenza2018grand}, \cite{meskhi2024predual}, \cite{rafeiro2023grand}, \cite{rafeiro2023local}, \cite{rafeiro2022grand}, \cite {restrepo2021hardy} and references cited therein. Recently, Nursultanov et al. have studied the grand Lorentz space in \cite{DHN} which extends some results in Lorentz spaces.

Let $\Omega\subset{\mathbb{R}^n}$ with $|\Omega|=1$. Let $M^*$ denote the collection of subsets of $\mathbb{R}^n$ with finite positive measure,
\begin{equation*}
M^*=\{\omega\subset\Omega: 0<|\omega|<\infty\}.
\end{equation*}
We refer to $M$ as a net, which is a fixed subset of $M^*$. For a function $f$ integrable on each element of the net $M$, the average function is defined as:
\begin{equation*}
    \bar{f}(t,M)=\sup\limits_{\substack{\omega\in M\\ |\omega|>t}}\frac{1}{|\omega|}\left|\int\limits_{\omega}f(x)dx\right|,
\end{equation*}
with usual consideration that if $\sup\{|\omega|:\omega\in M\}=\alpha<\infty$ and $t>\alpha$, then we set $\bar{f}(t,M)=0$.
Net spaces were introduced in 1998 by Nursultanov \cite{nursultanov1998net}. Formally, given $0<p,q\le \infty$, the net space $N_{p,q}(M)$ is the collection of functions $f$ such that:
\begin{equation*}
\|f\|_{N_{p,q}(M)}=\left(\int\limits_{0}^{\infty}\left(t^{1/p}\bar{f}(t,M)\right)^{q}\frac{dt}{t}\right)^{1/q}<\infty,
\end{equation*}
if $q<\infty$ and
\begin{equation*}
\|f\|_{N_{p,\infty}(M)}=\sup\limits_{t>0}t^{1/p}\bar{f}(t,M)<\infty,
\end{equation*} 
if $q=\infty$.

In \cite{nursultanov1998net}, the author studied the structural properties of these spaces. Reverse Hardy--Littlewood--type inequalities were also proved in the same work. Later in \cite{nursultanov2011net}, the authors considered a general version of the net spaces by considering the function space defined in the framework of general measures. A general criterion for the boundedness of integral operators was also proved on net spaces. In the same work, integral operators on weighted Lebesgue and Lorentz spaces were also discussed. The interpolation of net spaces was discussed in \cite{nursultanov1998net} and \cite{nursultanov2011net}. For interpolation result on anistropic net space we refer to \cite{bashirova2024interpolation}.

This paper is divided into four sections. In Section 2, we introduce the notion of grand net spaces. Some embedding results, its coincidence with grand Lorentz spaces under special assumption on nets is established. We also derive H\"{o}lder type inequality.  In Section 3, we establish real interpolation results for these spaces. In Section 4, we prove a criterion for the boundedness of integral operators in grand net spaces.

Throughout this paper, constants often different within the same series of inequalities, will be denoted by $c$ or $C$; by the symbol $p'$ we denote the function $\frac{p}{p-1}$, $1<p<\infty$; the relation $a\asymp b$ indicates that there exist positive constants $c_1$ and $c_2$ such that $c_1 a\leq b \leq c_2 a$.\\

\section{Grand Net Spaces}
Let $\theta\in\mathbb{R}$ and $\Omega\subset{\mathbb{R}}^n$ with $|\Omega|=1$. Let $M$ be a net of $\Omega$, then the grand net spaces $GN^{\theta}_{p,q}(M)$ is defined by:
\begin{equation}
   \left\|f\right\|_{{GN}^{\theta}_{p,q}(M)} = 
\begin{cases} 
\sup\limits_{0<\varepsilon\le1}\varepsilon^{\theta}\left(\int\limits_{0}^{1}\left(t^{\frac{1}{p}+\varepsilon}\bar{f}(t,M)\right)^{q}\frac{dt}{t}\right)^{\frac{1}{q}} & \text{for } \theta\ge0 \text{ and } 0<p<\infty,  \\ 
\inf\limits_{0<\varepsilon\le\frac{1}{p}}\varepsilon^{\theta}\left(\int\limits_{0}^{1}\left(t^{\frac{1}{p}-\varepsilon}\bar{f}(t,M)\right)^{q}\frac{dt}{t}\right)^{\frac{1}{q}} & \text{for } \theta<0\text{ and }0<p<\infty,  \\ 
\sup\limits_{0<\varepsilon\le1}\varepsilon^{\theta}\left(\int\limits_{0}^{1}\left(t^{\varepsilon}\bar{f}(t,M)\right)^{q}\frac{dt}{t}\right)^{\frac{1}{q}} & \text{for } \theta\ge0\text{ and } p=\infty, \\ 
\end{cases} 
\end{equation}
if $0<q<\infty$, and
\begin{equation}
   \left\|f\right\|_{GN^{\theta}_{p,\infty}(M)} = 
\begin{cases} 
\sup\limits_{0<\varepsilon\le1}\sup\limits_{0<t<1}\varepsilon^{\theta}t^{\frac{1}{p}+\varepsilon}\bar{f}(t,M) & \text{for }  \theta\ge0\text{ and }0<p<\infty,  \\ 
\inf\limits_{0<\varepsilon\le\frac{1}{p}}\sup\limits_{0<t<1}\varepsilon^{\theta}t^{\frac{1}{p}-\varepsilon}\bar{f}(t,M) & \text{for }  \theta<0\text{ and }0<p<\infty,   \\ 
\sup\limits_{0<\varepsilon\le1}\sup\limits_{0<t<1}\varepsilon^{\theta}t^{\varepsilon}\bar{f}(t,M) & \text{for } \theta\ge0\text{ and }p=\infty,  \\
\end{cases} 
\end{equation}
if $q=\infty$.

It is noted that when $\theta=0$ these spaces coincide with classical net spaces, i.e. $GN^{0}_{p,q}(M)=N_{p,q}(M)$. Hence, this space can be considered as a generalization of net spaces. Moreover for $\theta>0$, the following nested embeddings can also be verified using the definition of grand net spaces,
\begin{equation}
GN^{-\theta}_{p,q}(M) \hookrightarrow N_{p,q}(M)\hookrightarrow GN^{\theta}_{p,q}(M),
\end{equation}
where $0<p,q\le\infty$. 
The following theorem gives some simple formulae to compute the norm of functions in grand net spaces.
\begin{theorem}\label{t1}
Let $|\Omega|=1$, $s\in(1,\infty)$ and $0<p<\infty$,  then for $\theta>0$ we have
\begin{align}
    \|f\|_{GN^{\theta}_{p,\infty}}&\asymp\sup_{t>0}\frac{t^{\frac{1}{p}}}{|\ln{t}|^{\theta}}\bar{f}(t,M),\\
      \|f\|_{GN^{-\theta}_{p,\infty}}&\asymp\sup_{t>0}{t^{\frac{1}{p}}}{|\ln{t}|^{\theta}}\bar{f}(t,M),\\
       \|f\|_{GN^{\theta}_{p,s}}&\lesssim\left(\int_{0}^{1}\left(\frac{t^{\frac{1}{p}}}{|\ln{t}|^{\theta}}\bar{f}(t,M)\right)^{s}\frac{dt}{t}\right)^{\frac{1}{s}},\label{L6}\\
       \|f\|_{GN^{-\theta}_{p,s}}&\gtrsim\left(\int_{0}^{1}\left(t^{\frac{1}{p}}|\ln{t}|^{\theta}\bar{f}(t,M)\right)^{s}\frac{dt}{t}\right)^{\frac{1}{s}}.
\end{align}
\end{theorem}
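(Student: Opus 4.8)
The plan is to reduce all four statements to a single scalar optimization in the parameter $\varepsilon$ and then to reinsert the optimal $\varepsilon$ into the definitions (2.1)--(2.2). Fix $t\in(0,1)$ and $\theta>0$. Since $t^{\varepsilon}=e^{-\varepsilon|\ln t|}$, maximizing $\varepsilon\mapsto\theta\ln\varepsilon-\varepsilon|\ln t|$ over $\varepsilon\in(0,1]$ produces the interior critical point $\varepsilon^{\ast}=\theta/|\ln t|$, which is admissible precisely when $|\ln t|\ge\theta$, i.e. for small $t$. Recording the two resulting one-variable estimates
\[
\sup_{0<\varepsilon\le1}\varepsilon^{\theta}t^{\varepsilon}\asymp|\ln t|^{-\theta},\qquad
\inf_{0<\varepsilon\le1/p}\varepsilon^{-\theta}t^{-\varepsilon}\asymp|\ln t|^{\theta},
\]
with the same optimizer $\varepsilon^{\ast}$ in both cases, is the technical core of the whole theorem. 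On the complementary range $t\in(e^{-\theta},1)$ the extremum is attained at the endpoint $\varepsilon=1$ (respectively $\varepsilon=1/p$), and there I would verify the required comparison directly; the genuine content is concentrated where $t\le e^{-\theta}$. Throughout I use that $\bar f(t,M)=0$ for $t\ge1$ (because $|\omega|\le|\Omega|=1$), which lets me restrict every supremum and integral to $(0,1)$ and reconcile the $\sup_{t>0}$ appearing on the right-hand sides with the $(0,1)$ that appears in the definition.

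For the two $q=\infty$ identities (2.4) and (2.5) I would proceed through the supremum/infimum structure. For (2.4), interchanging the two suprema in (2.2) gives
\[
\|f\|_{GN^{\theta}_{p,\infty}}=\sup_{0<t<1}t^{1/p}\bar f(t,M)\Bigl(\sup_{0<\varepsilon\le1}\varepsilon^{\theta}t^{\varepsilon}\Bigr),
\]
and the first scalar estimate yields the claimed equivalence. For (2.5) the supremum over $t$ sits \emph{inside} the infimum over $\varepsilon$, so the interchange is not free: the lower bound follows from $\inf_{\varepsilon}\sup_{t}\ge\sup_{t}\inf_{\varepsilon}$ together with the second scalar estimate, whereas the matching upper bound requires committing to a single admissible $\varepsilon$ that serves uniformly in $t$.

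The two one-sided bounds (2.6) and (2.7) with $s\in(1,\infty)$ I expect to be the cleanest, and they explain why only one inequality is asserted in each. For (2.6), for each fixed $\varepsilon$ I move $\varepsilon^{\theta}t^{\varepsilon}$ inside the integral and bound it pointwise by $\sup_{0<\varepsilon'\le1}\varepsilon'^{\theta}t^{\varepsilon'}\asymp|\ln t|^{-\theta}$, uniformly in $\varepsilon$; the resulting integral no longer depends on $\varepsilon$, so taking the supremum over $\varepsilon$ gives (2.6). Symmetrically, for (2.7) the factor $\varepsilon^{-\theta}t^{-\varepsilon}$ is bounded below by $\inf_{0<\varepsilon'\le1/p}\varepsilon'^{-\theta}t^{-\varepsilon'}\asymp|\ln t|^{\theta}$, uniformly in $\varepsilon$, and since this lower bound is independent of $\varepsilon$, taking the infimum over $\varepsilon$ yields (2.7).

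The principal obstacle is the interchange of $\inf_{\varepsilon}$ and $\sup_{t}$ in the negative-index statements: only one direction is automatic, which is exactly why (2.6) is an upper bound and (2.7) a lower bound, while (2.4) and (2.5) must be argued on both sides. The delicate bookkeeping I would have to carry out is controlling the admissibility of $\varepsilon^{\ast}=\theta/|\ln t|$ near $t=1$, where it leaves the intervals $(0,1]$ and $(0,1/p]$, and checking that the endpoint contribution there does not spoil the comparison; this boundary analysis, rather than the interior optimization, is where the care is needed.
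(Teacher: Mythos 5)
Your strategy is the same as the paper's: optimize $u(\varepsilon)=\varepsilon^{\theta}t^{1/p+\varepsilon\operatorname{sign}\theta}$ at the critical point $\varepsilon^{\ast}=|\theta|/|\ln t|$, and then either interchange the two suprema (for (2.4)) or pull the optimized factor through the integral (for (2.6) and (2.7)). For those three statements your argument is correct and coincides with the paper's; your observation that $\sup_{\varepsilon}$ of an integral is at most the integral of $\sup_{\varepsilon}$, and dually for $\inf_{\varepsilon}$, is exactly why only one-sided bounds are asserted in (2.6)--(2.7).

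The gap is in (2.5). You correctly note that $\inf_{\varepsilon}\sup_{t}\ge\sup_{t}\inf_{\varepsilon}$ yields only the lower bound $\|f\|_{GN^{-\theta}_{p,\infty}}\gtrsim\sup_{t}t^{1/p}|\ln t|^{\theta}\bar f(t,M)$, and that the reverse inequality ``requires committing to a single admissible $\varepsilon$ that serves uniformly in $t$'' --- but you never produce such an $\varepsilon$, and in fact none exists against the extremal profile. If $\bar f(t,M)\asymp t^{-1/p}|\ln t|^{-\theta}$ for small $t$ (realizable, e.g., for $p>1$ with $f(x)=x^{-1/p}|\ln x|^{-\theta}$ on $\Omega=(0,1)$ and the net of intervals $(0,h)$), then the right-hand side of (2.5) is finite, yet for every fixed $\varepsilon>0$ one has
\begin{equation*}
\sup_{0<t<1}\varepsilon^{-\theta}\,t^{-\varepsilon}|\ln t|^{-\theta}=\infty,
\end{equation*}
since $t^{-\varepsilon}=e^{\varepsilon|\ln t|}$ beats $|\ln t|^{-\theta}$ as $t\to0$. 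So the $\lesssim$ half of (2.5) cannot be reached by the route you propose; it would require a genuinely different argument (or a re-examination of the statement). For what it is worth, the paper's own proof dispatches this case with the single word ``Similarly'', i.e., it silently performs the inf--sup interchange that you rightly flag as non-automatic; your proposal exposes the difficulty but does not resolve it.
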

\begin{proof}
    Consider the function $u(\varepsilon)=\varepsilon^{\theta}t^{\frac{1}{p}+\varepsilon \sign{\theta}}$ for $0<t\le1$ and $0<\varepsilon\le1$. Solving equation ${u'}(\varepsilon)=0$ gives $\varepsilon=\frac{|\theta|}{|\ln t|}$. Hence, we have
    \begin{equation*}
\sup\limits_{0<\varepsilon\le1}u(\varepsilon)\asymp\frac{t^{\frac{1}{p}}}{|\ln t|^{\theta}},
    \end{equation*}
   for $\theta >0,$ and 
     \begin{equation*}
    \inf\limits_{0<\varepsilon\le\frac{1}{p}}u(\varepsilon)\asymp{t^{\frac{1}{p}}}{|\ln t|^{\theta}}
    \end{equation*}
    for $\theta<0$.

    Therefore,
    \begin{align*}
     \|f\|_{GN^{\theta}_{p,\infty}}&=\sup_{0<\epsilon\le 1}\varepsilon^{\theta}\sup_{t>0}t^{\frac{1}{p}+\varepsilon}\bar{f}(t,M)\\
     &=\sup_{t>0}\left(\sup_{0<\epsilon\le 1}\varepsilon^{\theta} t^{\frac{1}{p}+\varepsilon}\right)\bar{f}(t,M)\\
     &\asymp\sup_{t>0}\frac{t^{\frac{1}{p}}}{|\ln{t}|^{\theta}}\bar{f}(t,M).\\
    \end{align*}
    Similarly,
    \begin{equation*}
     \|f\|_{GN^{-\theta}_{p,\infty}}\asymp\sup_{t>0}{t^{\frac{1}{p}}}{|\ln{t}|^{\theta}}\bar{f}(t,M).
     \end{equation*}
    To show \eqref{L6} we have
    \begin{align*}
\|f\|_{GN^{\theta}_{p,s}}&=\sup_{0<\varepsilon\le1}\varepsilon^{\theta}\left(\int_{0}^{1}\left(t^{\frac{1}{p}+\varepsilon}\bar{f}(t,M)\right)^{s}\frac{dt}{t}\right)^{\frac{1}{s}}\\
     &\le\left(\int\limits_{0}^{1}\left(\sup\limits_{0<\varepsilon\le1}\varepsilon^{\theta}t^{\frac{1}{p}+\varepsilon}\bar{f}(t,M)\right)^s\frac{dt}{t}\right)^{\frac{1}{s}}\\
     &\asymp\left(\int_{0}^{1}\left(\frac{t^{\frac{1}{p}}}{|\ln{t}|^{\theta}}\bar{f}(t,M)\right)^{s}\frac{dt}{t}\right)^{\frac{1}{s}}.
    \end{align*}
   Additionally, we get 
    \begin{align*}
     \|f\|_{GN^{-\theta}_{p,s}}&=\inf_{0<\varepsilon\le\frac{1}{p}}\varepsilon^{-\theta}\left(\int_{0}^{1}\left(t^{\frac{1}{p}-\varepsilon}\bar{f}(t,M)\right)^{s}\frac{dt}{t}\right)^{\frac{1}{s}}\\
     &\ge\left(\int\limits_{0}^{1}\left(\inf_{0<\varepsilon\le\frac{1}{p}}\varepsilon^{-\theta}t^{\frac{1}{p}-\varepsilon}\bar{f}(t,M)\right)^s\frac{dt}{t}\right)^{\frac{1}{s}}\\
    &\asymp\left(\int_{0}^{1}\left({t^{\frac{1}{p}}}{|\ln{t}|^{\theta}}\bar{f}(t,M)\right)^{s}\frac{dt}{t}\right)^{\frac{1}{s}}.
    \end{align*}
\end{proof}
The following proposition gives the embeddings of the grand net spaces with respect to exponents and the nets.
\begin{proposition}\label{P1}
\begin{enumerate}[(i)]
\item If $\theta\in\mathbb{R}$, $p,s\in(0,\infty]$ and $M_1\subset M_2$ then $GN^{\theta}_{p,s}(M_2)\hookrightarrow GN^{\theta}_{p,s}(M_1)$.
\item If $\theta\in\mathbb{R}$ such that $\theta\le \theta_{1}$ then $GN^{\theta}_{p,s}(M)\hookrightarrow GN^{\theta_1}_{p,s}(M)$.
\item If $\theta\in\mathbb{R}$, $1\le s<s_1$ then $GN^{\theta}_{p,s}(M)\hookrightarrow GN^{\theta}_{p,s_1}(M)$.
\item If $p<p_1$ and $s<s_1$ then $GN^{\theta}_{p_1,s_1}(M)\hookrightarrow GN^{\theta}_{p,s}(M)$.
\item For $0<\delta<1$ and $\theta>0$. We have
\begin{equation}\label {A1}
\left\|f\right\|_{GN^{\theta}_{p,q}(M)} \asymp\sup\limits_{0<\varepsilon\le\delta}\varepsilon^{\theta}\left(\int\limits_{0}^{1}\left(t^{\frac{1}{p}+\varepsilon}\bar{f}(t,M)\right)^{q}\frac{dt}{t}\right)^{\frac{1}{q}},
\end{equation}
and for $0<\delta<\frac{1}{p}$, 
\begin{equation}\label {A2}
\left\|f\right\|_{GN^{-\theta}_{p,q}(M)} \asymp\inf\limits_{0<\varepsilon\le\delta}\varepsilon^{-\theta}\left(\int\limits_{0}^{1}\left(t^{\frac{1}{p}-\varepsilon}\bar{f}(t,M)\right)^{q}\frac{dt}{t}\right)^{\frac{1}{q}}.
\end{equation}
\end{enumerate}    
\end{proposition}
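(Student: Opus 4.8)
The argument rests on two monotonicity facts I would record at the outset. From the definition of the average function, $\bar f(t,M)$ is nonincreasing in $t$ (increasing $t$ shrinks the admissible family $\{\omega\in M:|\omega|>t\}$) and nondecreasing in the net (if $M_1\subset M_2$ the defining supremum runs over a larger family). Since every $\omega\subset\Omega$ satisfies $|\omega|\le1=|\Omega|$, we also have $\bar f(t,M)=0$ for $t>1$, so every integral may be taken over $(0,1)$; and because $0<t<1$ the map $\varepsilon\mapsto t^{1/p+\varepsilon}$ is decreasing while $\varepsilon\mapsto t^{1/p-\varepsilon}$ is increasing, so the inner quantity $\Phi_{\pm}(\varepsilon):=\left(\int_0^1\left(t^{1/p\pm\varepsilon}\bar f(t,M)\right)^q\frac{dt}{t}\right)^{1/q}$ is monotone in $\varepsilon$ (nonincreasing for $+$, nondecreasing for $-$), with $\Phi_{\pm}(0^{+})=\|f\|_{N_{p,q}}$.

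I would prove (v) first, as it is the workhorse for the infimum cases. The inclusions $\sup_{0<\varepsilon\le\delta}\le\sup_{0<\varepsilon\le1}$ and $\inf_{0<\varepsilon\le1/p}\le\inf_{0<\varepsilon\le\delta}$ are trivial, so only the reverse bounds need work, and there I would only have to control the tail range $\delta<\varepsilon\le1$ (respectively $\delta<\varepsilon\le1/p$). On that range the monotonicity of $\Phi_{\pm}$ pins the inner quantity against its value at $\varepsilon=\delta$, while $\varepsilon^{\pm\theta}$ differs from $\delta^{\pm\theta}$ by a bounded factor; the tail is then dominated by $\delta^{-\theta}$ (resp.\ a fixed $\delta,p,\theta$-power) times the term $\delta^{\theta}\Phi_{+}(\delta)$ already present in the restricted extremum, which yields \eqref{A1} and \eqref{A2}.

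Statement (i) follows at once from net-monotonicity: $\bar f(t,M_1)\le\bar f(t,M_2)$ propagates through the integrals and through both the outer supremum ($\theta\ge0$) and the outer infimum ($\theta<0$). For (ii) I would split on signs. If $0\le\theta\le\theta_1$, both norms use the supremum form and $\varepsilon^{\theta_1}\le\varepsilon^{\theta}$ for $0<\varepsilon\le1$ gives the bound termwise; if $\theta\le\theta_1<0$, after using (v) to restrict both infima to $0<\varepsilon\le\delta\le1$ the same inequality compares them. The one genuine case, $\theta<0\le\theta_1$, I would route through $GN^{0}_{p,s}=N_{p,s}$: from $\varepsilon^{\theta_1}\le1$ on $(0,1]$ and $\Phi_{+}$ decreasing one gets $\|f\|_{GN^{\theta_1}_{p,s}}\le\|f\|_{N_{p,s}}$, while (v) restricts the other infimum to $0<\varepsilon\le\delta\le1$, where $\varepsilon^{\theta}\ge1$ and $\Phi_{-}(\varepsilon)\ge\Phi_{-}(0^{+})=\|f\|_{N_{p,s}}$ give $\|f\|_{GN^{\theta}_{p,s}}\gtrsim\|f\|_{N_{p,s}}$; chaining the two yields the embedding.

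For (iii) and (iv) I would freeze $\varepsilon$, prove a one-parameter inclusion for the inner $L^q(dt/t)$-quantity with a constant \emph{uniform in $\varepsilon$}, and only then take the outer extremum; securing this uniformity is the crux. For (iii), writing $g_\varepsilon(t)=t^{1/p\pm\varepsilon}\bar f(t,M)$ and $a=1/p\pm\varepsilon$, the monotonicity of $\bar f$ yields the weak bound $\|g_\varepsilon\|_{L^\infty(0,1)}\le(as)^{1/s}\|g_\varepsilon\|_{L^s(dt/t)}$, whence $\|g_\varepsilon\|_{s_1}^{s_1}\le\|g_\varepsilon\|_\infty^{s_1-s}\|g_\varepsilon\|_s^s$ gives the Lorentz-type inclusion $\|g_\varepsilon\|_{s_1}\le(as)^{(s_1-s)/(ss_1)}\|g_\varepsilon\|_s$; since $a$ stays bounded above (by $1/p+1$ when $\theta\ge0$, by $1/p$ when $\theta<0$) the constant is $\varepsilon$-uniform, and multiplying by $\varepsilon^{\theta}$ and taking the outer extremum gives $\|f\|_{GN^{\theta}_{p,s_1}}\lesssim\|f\|_{GN^{\theta}_{p,s}}$. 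For (iv), with $p<p_1$ and $s<s_1$, I would apply H\"older with exponents $s_1/s,(s_1/s)'$ to the identity $\int_0^1(t^{1/p\pm\varepsilon}\bar f)^s\frac{dt}{t}=\int_0^1 t^{(1/p-1/p_1)s}(t^{1/p_1\pm\varepsilon}\bar f)^s\frac{dt}{t}$; the weight exponent $(1/p-1/p_1)s(s_1/s)'$ is positive (as $p<p_1$) and, crucially, free of $\varepsilon$, so the constant $\left(\int_0^1 t^{(1/p-1/p_1)s(s_1/s)'}\frac{dt}{t}\right)^{1/(s_1/s)'}$ is finite and $\varepsilon$-uniform. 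For $\theta\ge0$ this bounds the supremum directly; for $\theta<0$ the only subtlety is that the infimum for $GN^{\theta}_{p,s}$ ranges over $0<\varepsilon\le1/p\supseteq(0,1/p_1]$, so restricting to the common range $(0,1/p_1]$ only increases it, after which the frozen-$\varepsilon$ bound and the infimum give $\|f\|_{GN^{\theta}_{p,s}}\lesssim\|f\|_{GN^{\theta}_{p_1,s_1}}$. The \emph{main obstacle} throughout is exactly this $\varepsilon$-uniformity of the embedding constants, together with the bookkeeping of the infimum ranges ($1/p$ versus $1/p_1$, and the tail $\varepsilon>1$), which (v) and the range inclusion are designed to handle.
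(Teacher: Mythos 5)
Your proposal is correct, and for items (i) and (v) it is essentially the paper's argument in different clothing, while for (ii)--(iv) it supplies details the paper omits. For (v) the paper follows Fiorenza--Karadzhov and proves the nontrivial direction by the substitution $\varepsilon=\sigma/\eta$ with $\eta\delta=1$ (resp.\ $\eta\delta=1/p$), using that the inner integral is monotone in $\varepsilon$ to absorb the rescaling; your direct tail estimate on $\delta<\varepsilon\le 1$ rests on exactly the same monotonicity of $\Phi_{\pm}$ and yields the same constants up to normalization, so the two are interchangeable. For (i) both proofs reduce to $\bar f(t,M_1)\le\bar f(t,M_2)$. The real divergence is in (ii)--(iv): the paper disposes of these with a one-line appeal to the embedding properties of the classical net spaces $N_{p,q}$ from Nursultanov's 1998 paper, applied at each frozen $\varepsilon$, whereas you prove the frozen-$\varepsilon$ inclusions from scratch (the weak-type bound $\|t^{a}\bar f\|_{L^\infty}\le (as)^{1/s}\|t^{a}\bar f\|_{L^s(dt/t)}$ plus interpolation for (iii), H\"older with an $\varepsilon$-free weight for (iv)) and, crucially, track that the resulting constants are uniform in $\varepsilon$ before taking the outer supremum or infimum. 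That uniformity, together with the bookkeeping of the infimum ranges $(0,1/p]$ versus $(0,1/p_1]$ and the sign cases in (ii), is precisely what the paper's citation glosses over, so your version is the more self-contained and, for the grand-scale setting, the more honest one; the paper's version buys brevity at the cost of leaving the uniformity implicit. One small boundary point worth a sentence in a final write-up: in (iii) with $\theta<0$ the exponent $a=1/p-\varepsilon$ can vanish at $\varepsilon=1/p$, where your weak-type constant degenerates; this is harmless (either both sides are infinite or $\bar f\equiv 0$), and is cleanly avoided by first invoking (v) to restrict the infimum to $\varepsilon\le\delta<1/p$, which you already have at hand.
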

\begin{proof}
The proof of $(i)$ follows immediately from the fact that if $M_1\subset M_2$, then $\bar{f}(t,M_1)\le\bar{f}(t,M_2)$.
 Embeddings in $(ii)-(iv)$ follow from the definition of grand net spaces and its properties established in \cite{nursultanov1998net}.  
To prove \eqref{A1} we follow the arguments given mainly in \cite{fiorenza2004grand}. Note that for $0<\delta<1$  we choose $\eta>1$ such that $\eta\delta=1$:
\begin{align*}
\sup\limits_{0<\varepsilon\le\delta}\varepsilon^{\theta}\left(\int\limits_{0}^{1}\left(t^{\frac{1}{p}+\varepsilon}\bar{f}(t,M)\right)^{q}\frac{dt}{t}\right)^{\frac{1}{q}}&=\sup\limits_{0<\frac{\sigma}{\eta}\le\delta}\left(\frac{\sigma}{\eta}\right)^{\theta}\left(\int\limits_{0}^{1}\left(t^{\frac{1}{p}+\frac{\sigma}{\eta}}\bar{f}(t,M)\right)^{q}\frac{dt}{t}\right)^{\frac{1}{q}}\\
&=\eta^{-\theta}\sup\limits_{0<\sigma\le\eta\delta}\sigma^{\theta}\left(\int\limits_{0}^{1}\left(t^{\frac{1}{p}+\frac{\sigma}{\eta}}\bar{f}(t,M)\right)^{q}\frac{dt}{t}\right)^{\frac{1}{q}}\\
&\ge \eta^{-\theta}\sup\limits_{0<\sigma\le 1}\sigma^{\theta}\left(\int\limits_{0}^{1}\left(t^{\frac{1}{p}+\sigma}\bar{f}(t,M)\right)^{q}\frac{dt}{t}\right)^{\frac{1}{q}}\\
&=c \left\|f\right\|_{GN^{\theta}_{p,q}(M)}. 
\end{align*}
 For $0<\delta<\frac{1}{p}$  we choose $\eta>1$ such that $\eta\delta=\frac{1}{p}$
\begin{align*}
\inf\limits_{0<\varepsilon\le\delta}\varepsilon^{-\theta}\left(\int\limits_{0}^{1}\left(t^{\frac{1}{p}-\varepsilon}\bar{f}(t,M)\right)^{q}\frac{dt}{t}\right)^{\frac{1}{q}}&=\inf\limits_{0<\frac{\sigma}{\eta}\le\delta}\left(\frac{\sigma}{\eta}\right)^{-\theta}\left(\int\limits_{0}^{1}\left(t^{\frac{1}{p}-\frac{\sigma}{\eta}}\bar{f}(t,M)\right)^{q}\frac{dt}{t}\right)^{\frac{1}{q}}\\
&=\eta^{\theta}\inf\limits_{0<\sigma\le\eta\delta}\sigma^{-\theta}\left(\int\limits_{0}^{1}\left(t^{\frac{1}{p}-\frac{\sigma}{\eta}}\bar{f}(t,M)\right)^{q}\frac{dt}{t}\right)^{\frac{1}{q}}\\
&\le \eta^{\theta}\inf\limits_{0<\sigma\le \frac{1}{p}}\sigma^{-\theta}\left(\int\limits_{0}^{1}\left(t^{\frac{1}{p}-\sigma}\bar{f}(t,M)\right)^{q}\frac{dt}{t}\right)^{\frac{1}{q}}\\
&=c \left\|f\right\|_{GN^{-\theta}_{p,q}(M)}. 
\end{align*}
\end{proof}

\begin{proposition}
    Let $0<\theta<\theta_1$ and $q<q_1<\infty$ such that $\theta_1-\frac{1}{q}=\theta-\frac{1}{q_1}$. Then $GN^{\theta}_{p,q_1}(M) \hookrightarrow GN^{\theta_1}_{p,q}(M)$. 
\end{proposition}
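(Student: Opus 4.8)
The plan is to compare the two norms parameter-by-parameter. Throughout I assume $0<p<\infty$ and $\theta>0$, so that both norms come from the first case of the definition; it is convenient to abbreviate
\[
\Phi_r(\sigma)=\left(\int_0^1\left(t^{\frac1p+\sigma}\bar f(t,M)\right)^r\frac{dt}{t}\right)^{\frac1r},\qquad 0<\sigma\le1,\ 0<r<\infty,
\]
so that $\|f\|_{GN^{\theta}_{p,q_1}(M)}=\sup_{0<\sigma\le1}\sigma^{\theta}\Phi_{q_1}(\sigma)$ and $\|f\|_{GN^{\theta_1}_{p,q}(M)}=\sup_{0<\varepsilon\le1}\varepsilon^{\theta_1}\Phi_q(\varepsilon)$. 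Since both quantities are suprema, it suffices to fix $\varepsilon\in(0,1]$ and bound $\varepsilon^{\theta_1}\Phi_q(\varepsilon)$ by a fixed multiple of $\|f\|_{GN^{\theta}_{p,q_1}(M)}$, uniformly in $\varepsilon$.

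The main step is to interpolate $\Phi_q(\varepsilon)$ against $\Phi_{q_1}(\sigma)$ for a parameter $\sigma<\varepsilon$ to be chosen. Since $q<q_1$, I would factor the integrand inside $\Phi_q(\varepsilon)^q$ as $\left(t^{\frac1p+\sigma}\bar f(t,M)\right)^{q}\cdot t^{(\varepsilon-\sigma)q}$ and apply H\"older's inequality with exponents $\frac{q_1}{q}$ and $\frac{q_1}{q_1-q}$. The first factor then yields $\Phi_{q_1}(\sigma)^{q}$, while the second yields $\left(\int_0^1 t^{(\varepsilon-\sigma)q\frac{q_1}{q_1-q}}\frac{dt}{t}\right)^{\frac{q_1-q}{q_1}}$, which is finite precisely because $\varepsilon-\sigma>0$; evaluating this power integral as $\big((\varepsilon-\sigma)\tfrac{qq_1}{q_1-q}\big)^{-\frac{q_1-q}{q_1}}$ and taking $q$-th roots gives
\[
\Phi_q(\varepsilon)\le C_{q,q_1}\,(\varepsilon-\sigma)^{-\left(\frac1q-\frac1{q_1}\right)}\Phi_{q_1}(\sigma),
\]
with $C_{q,q_1}=\left(\frac{q_1-q}{qq_1}\right)^{\frac1q-\frac1{q_1}}$.

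Finally I would choose $\sigma=\varepsilon/2$, which is admissible since then $\sigma\in(0,\tfrac12]\subset(0,1]$ and $\varepsilon-\sigma=\varepsilon/2>0$. Multiplying the last display by $\varepsilon^{\theta_1}$ and inserting $\sigma^{\theta}\sigma^{-\theta}$ gives
\[
\varepsilon^{\theta_1}\Phi_q(\varepsilon)\le C_{q,q_1}\,\varepsilon^{\theta_1}(\varepsilon/2)^{-\left(\frac1q-\frac1{q_1}\right)}(\varepsilon/2)^{-\theta}\,\sigma^{\theta}\Phi_{q_1}(\sigma).
\]
The hypothesis $\theta_1-\frac1q=\theta-\frac1{q_1}$, equivalently $\theta_1-\theta=\frac1q-\frac1{q_1}$, is exactly what forces the powers of $\varepsilon$ to collapse: the prefactor simplifies to $C_{q,q_1}2^{\theta_1}$, independent of $\varepsilon$. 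Since $\sigma^{\theta}\Phi_{q_1}(\sigma)\le\|f\|_{GN^{\theta}_{p,q_1}(M)}$, taking the supremum over $\varepsilon\in(0,1]$ yields $\|f\|_{GN^{\theta_1}_{p,q}(M)}\le C_{q,q_1}2^{\theta_1}\|f\|_{GN^{\theta}_{p,q_1}(M)}$, which is the claimed embedding; the case $p=\infty$ is handled identically after replacing $t^{\frac1p+\sigma}$ by $t^{\sigma}$. I expect the only genuine obstacle to be spotting the correct H\"older split together with the balanced choice $\sigma=\varepsilon/2$: once these are in place, the parameter relation produces the cancellation automatically and the remainder is the elementary evaluation of a single power integral.
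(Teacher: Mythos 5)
Your argument is correct and is essentially the paper's own proof: the authors likewise split $t^{1/p+\varepsilon}=t^{1/p+\varepsilon/2}\cdot t^{\varepsilon/2}$ (your choice $\sigma=\varepsilon/2$), apply H\"older with exponents $\bigl(\tfrac{q_1}{q},\tfrac{q_1}{q_1-q}\bigr)$, evaluate the same power integral to produce the factor $\varepsilon^{-(1/q-1/q_1)}$, and invoke the relation $\theta_1-\tfrac1q=\theta-\tfrac1{q_1}$ to collapse the powers of $\varepsilon$. No changes needed.
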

\begin{proof}
Let $s=\frac{q_1}{q}>1$. Then by applying H\"{o}lder's inequality with respect to conjugate pair of exponents $(s,s'=\frac{q_1}{q_1-q})$, we have
\begin{align*}
 \|f\|_{GN^{\theta_1}_{p,q}(M)}&=\sup\limits_{0<\varepsilon<1} \varepsilon^{\theta_1} \left(\int\limits_{0}^{1}\left(t^{\frac{1}{p}+\frac{\varepsilon}{2}}\bar{f}(t,M)\right)^{q}t^{\frac{q\varepsilon}{2}}\frac{dt}{t}\right)^{\frac{1}{q}} \\
&\le\sup\limits_{0<\varepsilon<1} \varepsilon^{\theta_1} 
\left(\int\limits_{0}^{1}\left(t^{\frac{1}{p}+\frac{\varepsilon}{2}}\bar{f}(t,M)\right)^{q_1}\frac{dt}{t}\right)^{\frac{1}{q_1}}\left(\int\limits_0^{1}t^{\frac{qs'\varepsilon}{2}-1}dt\right)^{\frac{1}{qs'}}\\
&\le c \sup\limits_{0<\varepsilon<1} \varepsilon^{\theta_1-\frac{1}{qs'}} 
\left(\int\limits_{0}^{1}\left(t^{\frac{1}{p}+\frac{\varepsilon}{2}}\bar{f}(t,M)\right)^{q_1}\frac{dt}{t}\right)^{\frac{1}{q_1}}\\
\end{align*}
\begin{align*}
&\le c \sup\limits_{0<\varepsilon<1} \varepsilon^{\theta_1-\frac{1}{q}+\frac{1}{q_1}} 
\left(\int\limits_{0}^{1}\left(t^{\frac{1}{p}+{\varepsilon}{}}\bar{f}(t,M)\right)^{q_1}\frac{dt}{t}\right)^{\frac{1}{q_1}}\\
&= c \sup\limits_{0<\varepsilon<1}\varepsilon^{\theta} \left(\int\limits_{0}^{1}\left(t^{\frac{1}{p}+{\varepsilon}}\bar{f} (t,M)\right)^{q_1}\frac{dt}{t}\right)^{\frac{1}{q_1}}\asymp\|f\|_{GN^{\theta}_{p,q_1}(M)}. 
\end{align*}
\end{proof}

The following theorem is H\"{o}lder type inequality for grand net spaces.
\begin{theorem}
    Let $1\le p_1,p_2<\infty$ and $1\le  s_1,s_2\le \infty$, $\theta\ge0$ and ${1}{}=\frac{1}{p_1}+\frac{1}{p_2}$, ${1}{}=\frac{1}{s_1}+\frac{1}{s_2}$. Then
\begin{equation}\label{L4}
\int_{0}^{1}\bar{f}(t,M)\bar{g}(t,M)dt \le \left\|f\right\|_{GN^{\theta}_{p_1,s_1}(M)}  \left\|g\right\|_{GN^{-\theta}_{p_2,s_2}(M)}.
\end{equation}
\end{theorem}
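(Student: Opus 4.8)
The plan is to reduce the estimate to a single application of Hölder's inequality, after inserting a common auxiliary parameter $\varepsilon$ that is tuned simultaneously to the two grand exponents $\theta$ and $-\theta$. The starting observation is that the relation $\frac{1}{p_1}+\frac{1}{p_2}=1$ lets me write, for \emph{every} $\varepsilon>0$,
\[
\bar{f}(t,M)\bar{g}(t,M)=\left(t^{\frac{1}{p_1}+\varepsilon}\bar{f}(t,M)\right)\left(t^{\frac{1}{p_2}-\varepsilon}\bar{g}(t,M)\right)t^{-1},
\]
since the powers of $t$ recombine to $t^{1/p_1+1/p_2-1}=t^{0}$ independently of $\varepsilon$. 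Hence
\[
\int_0^1\bar{f}(t,M)\bar{g}(t,M)\,dt=\int_0^1\left(t^{\frac{1}{p_1}+\varepsilon}\bar{f}(t,M)\right)\left(t^{\frac{1}{p_2}-\varepsilon}\bar{g}(t,M)\right)\frac{dt}{t}
\]
for all $\varepsilon$, and Hölder's inequality with the conjugate pair $(s_1,s_2)$ (using $\frac{1}{s_1}+\frac{1}{s_2}=1$) gives
\[
\int_0^1\bar{f}\bar{g}\,dt\le\left(\int_0^1\left(t^{\frac{1}{p_1}+\varepsilon}\bar{f}\right)^{s_1}\frac{dt}{t}\right)^{\frac{1}{s_1}}\left(\int_0^1\left(t^{\frac{1}{p_2}-\varepsilon}\bar{g}\right)^{s_2}\frac{dt}{t}\right)^{\frac{1}{s_2}},
\]
with the obvious $L^{\infty}$-modification when $s_1$ or $s_2$ equals $\infty$.

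Next I would reintroduce the grand factors by multiplying and dividing by $\varepsilon^{\theta}$, so that the right-hand side becomes the product of $\varepsilon^{\theta}\big(\int_0^1(t^{1/p_1+\varepsilon}\bar{f})^{s_1}\frac{dt}{t}\big)^{1/s_1}$ and $\varepsilon^{-\theta}\big(\int_0^1(t^{1/p_2-\varepsilon}\bar{g})^{s_2}\frac{dt}{t}\big)^{1/s_2}$. The first factor is, for \emph{every} admissible $\varepsilon\in(0,1]$, one of the quantities over which the supremum defining $\|f\|_{GN^{\theta}_{p_1,s_1}(M)}$ is taken, hence it is bounded above by that norm regardless of the choice of $\varepsilon$. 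This uniform bound is exactly what frees the parameter to be deployed on the second factor.

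The key step is then to exploit the infimum structure of $\|g\|_{GN^{-\theta}_{p_2,s_2}(M)}$, which for $\theta>0$ is an infimum over $\varepsilon\in(0,1/p_2]$ of precisely the second factor. Given $\delta>0$, I would select $\varepsilon=\varepsilon_\delta\in(0,1/p_2]$ realizing that infimum up to $\delta$, so that the second factor does not exceed $\|g\|_{GN^{-\theta}_{p_2,s_2}(M)}+\delta$. Since $p_2\ge1$ forces $1/p_2\le1$, this $\varepsilon_\delta$ is simultaneously admissible for the supremum defining $\|f\|$, and combining the two bounds yields
\[
\int_0^1\bar{f}\bar{g}\,dt\le\|f\|_{GN^{\theta}_{p_1,s_1}(M)}\left(\|g\|_{GN^{-\theta}_{p_2,s_2}(M)}+\delta\right).
\]
Letting $\delta\to0$ finishes the proof. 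The case $\theta=0$ is simpler: both norms reduce to net norms and, since $\bar f(t,M)$ vanishes for $t\ge1$, the choice $\varepsilon\to0^{+}$ in the displayed identity recovers the classical net-space Hölder inequality $\int_0^1\bar f\bar g\,dt\le\|f\|_{N_{p_1,s_1}}\|g\|_{N_{p_2,s_2}}$.

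The main obstacle is precisely the coupling of the single parameter $\varepsilon$ to both a supremum (for $f$) and an infimum (for $g$): one cannot optimize the two factors independently. The argument circumvents this because the $f$-factor is dominated by $\|f\|_{GN^{\theta}_{p_1,s_1}(M)}$ \emph{uniformly} in $\varepsilon$, leaving the parameter free to almost realize the infimum in $g$. The only compatibility requirement is that the two admissible ranges for $\varepsilon$ overlap, which is guaranteed by $p_2\ge1$; the endpoint cases $s_1,s_2\in\{1,\infty\}$ need only the standard $L^{1}$–$L^{\infty}$ form of Hölder's inequality and require no separate treatment.
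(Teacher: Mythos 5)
Your proposal is correct and follows essentially the same route as the paper: the same insertion of the free parameter $\varepsilon$ using $\frac{1}{p_1}+\frac{1}{p_2}=1$, the same Hölder step with $(s_1,s_2)$, and the same splitting of $\varepsilon^{\theta}\varepsilon^{-\theta}$ to bound one factor by the supremum norm of $f$ and optimize the other toward the infimum norm of $g$. Your write-up is in fact slightly more careful than the paper's, which compresses the final optimization into the phrase ``since $\varepsilon>0$ is arbitrary,'' whereas you make explicit the near-minimizer $\varepsilon_\delta$ and the compatibility of the two admissible ranges for $\varepsilon$.
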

\begin{proof}
 The proof of \eqref{L4} follows from the application of H\"{o}lder's inequality.
\begin{align*}
    \int_{0}^{1}\bar{f}(t,M)\bar{g}(t,M)dt&=\int_{0}^{1}\left(t^{\frac{1}{p_1}+\varepsilon}\bar{f}(t,M)t^{\frac{1}{p_2}-\varepsilon}\bar{g}(t,M)\right)\frac{dt}{t}\\
    &\le \varepsilon^{\theta}\left(\int\limits_{0}^{1}\left(t^{\frac{1}{p_1}+{\varepsilon}}\bar{f}(t,M)\right)^{s_1}\frac{dt}{t}\right)^{\frac{1}{s_1}} \varepsilon^{-\theta}\left(\int\limits_{0}^{1}\left(t^{\frac{1}{p_2}-{\varepsilon}}\bar{g}(t,M)\right)^{s_2}\frac{dt}{t}\right)^{\frac{1}{s_2}}\\
    &\le \|f\|_{GN^{\theta}_{p_1,s_1}(M)}\varepsilon^{-\theta}\left(\int\limits_{0}^{1}\left(t^{\frac{1}{p_2}-{\varepsilon}}\bar{g}(t,M)\right)^{s_2}\frac{dt}{t}\right)^{\frac{1}{s_2}}.
\end{align*}
Since $\varepsilon>0$ is arbitrary, it implies
\begin{equation}
\int_{0}^{1}\bar{f}(t,M)\bar{g}(t,M)dt \le \left\|f\right\|_{GN^{\theta}_{p_1,s_1}(M)}  \left\|g\right\|_{GN^{-\theta}_{p_2,s_2}(M)}.
\end{equation}
\end{proof}
\subsection{Equivalence of Norms}
In this subsection we show that the grand Lorentz spaces introduced in \cite{DHN} are contained in the scale of grand net spaces. Let $f^*$ be non--increasing rearrangement of $f$. By $f^{**}$, we denote the maximal function of $f^*$ given by $f^{**}(t)=\frac{1}{t}\int\limits_0^tf^{*}(s)ds$. The grand Lorentz spaces $GL^{\theta}_{p,q}(\Omega)$ were introduced by Nursultanov et al. in \cite{DHN}. Like in the classical case, we  introduce the grand Lorentz space $\mathcal{GL}^{\theta}_{p,q}(\Omega)$ using $f^{**}$ instead of $f^{*}$ in the form
\begin{equation}
   \left\|f\right\|_{\mathcal{GL}^{\theta}_{p,q}(\Omega)} = 
\begin{cases} 
\sup\limits_{0<\varepsilon\le1}\varepsilon^{\theta}\left(\int\limits_{0}^{1}\left(t^{\frac{1}{p}+\varepsilon}{f^{**}}(t)\right)^{q}\frac{dt}{t}\right)^{\frac{1}{q}} & \text{if } \theta\ge0\;,\;0<p<\infty  \\ 
\inf\limits_{0<\varepsilon\le\frac{1}{p}}\varepsilon^{\theta}\left(\int\limits_{0}^{1}\left(t^{\frac{1}{p}-\varepsilon}{f^{**}}(t)\right)^{q}\frac{dt}{t}\right)^{\frac{1}{q}} & \text{if } \theta<0\;,\;0<p<\infty  \\ 
\sup\limits_{0<\varepsilon\le1}\varepsilon^{\theta}\left(\int\limits_{0}^{1}\left(t^{\varepsilon}{f^{**}}(t)\right)^{q}\frac{dt}{t}\right)^{\frac{1}{q}} & \text{if } \theta\ge0\;, p=\infty\;  \\ 
\end{cases} 
\end{equation}
if $q<\infty$ and
\begin{equation}
   \left\|f\right\|_{\mathcal{GL}^{\theta}_{p,q}(\Omega)} = 
\begin{cases} 
\sup\limits_{0<\varepsilon\le1}\sup\limits_{0<t<1}\varepsilon^{\theta}t^{\frac{1}{p}+\varepsilon}{f^{**}}(t,M) & \text{if } \theta\le0\;,\;0<p<\infty  \\ 
\inf\limits_{0<\varepsilon\le\frac{1}{p}}\sup\limits_{0<t<1}\varepsilon^{\theta}t^{\frac{1}{p}-\varepsilon}{f^{**}}(t,M) & \text{if } \theta<0\;,\;0<p<\infty   \\ 
\sup\limits_{0<\varepsilon\le1}\sup\limits_{0<t<1}\varepsilon^{\theta}t^{\varepsilon}{f^{**}}(t,M) & \text{if } \theta\ge0\;,\;0<p<\infty   \\ 
\end{cases} 
\end{equation}
if $q=\infty$.
\begin{remark}
    Similar to Proposition \ref{P1} (iv)-(v), when $\theta>0$ we have:
    
\begin{equation}\label {D1}
\left\|f\right\|_{\mathcal{GL}^{\theta}_{p,q}(\Omega)} \asymp\sup\limits_{0<\varepsilon\le\delta}\varepsilon^{\theta}\left(\int\limits_{0}^{1}\left(t^{\frac{1}{p}+\varepsilon}{f^{**}}(t)\right)^{q}\frac{dt}{t}\right)^{\frac{1}{q}}
\end{equation}
for $0<\delta<1$, and 
\begin{equation}\label {D2}
\left\|f\right\|_{\mathcal{GL}^{-\theta}_{p,q}(\Omega)} \asymp\inf\limits_{0<\varepsilon\le\delta}\varepsilon^{-\theta}\left(\int\limits_{0}^{1}\left(t^{\frac{1}{p}-\varepsilon}{f^{**}}(t)\right)^{q}\frac{dt}{t}\right)^{\frac{1}{q}}
\end{equation}
for $0<\delta<\frac{1}{p}$. 
\end{remark}
\begin{remark}\label{r5}
For any $\theta\in\mathbb{R}$, $0<p,q\le\infty$. The  equivalence of the norms 
\begin{equation*}
    \left\|f\right\|_{{GL}^{\theta}_{p,q}(\Omega)}\asymp    \left\|f\right\|_{\mathcal{GL}^{\theta}_{p,q}(\Omega)}
\end{equation*}
holds. 
The proof follows in a similar manner to that for the classical Lorentz spaces. Using the fact $f^*<f^{**}$, we have $  \left\|f\right\|_{{GL}^{\theta}_{p,q}(\Omega)}\le    \left\|f\right\|_{\mathcal{GL}^{\theta}_{p,q}(\Omega)}$. For the reverse inequality choosing $\delta=\frac{1}{2p'}$ in \eqref{D1} and applying Hardy's inequality we obtain
\begin{equation*}
\left\|f\right\|_{\mathcal{GL}^{\theta}_{p,q}(\Omega)}\le\sup\limits_{0<\varepsilon<\frac{1}{2p'}}\frac{1}{\left(\frac{1}{p'}-\varepsilon\right)}\left\|f\right\|_{{GL}^{\theta}_{p,q}(\Omega)}=c_{p'}\left\|f\right\|_{{GL}^{\theta}_{p,q}(\Omega)}.
\end{equation*}
\end{remark}
The following statement shows that the grand Lorentz space is a special case of grand net spaces. 
\begin{theorem}
Assume that $1<p,q<\infty$, $\theta\in\mathbb{R},$ and  $M^*$ is the family of all finite measure subsets of a domain $\Omega\subset\mathbb{R}^n$. Then
\end{theorem}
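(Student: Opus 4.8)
The whole statement should reduce to a single pointwise comparison between the averaging functional of the full net and the maximal function of the rearrangement, namely
\begin{equation*}
\bar f(t,M^*)\asymp f^{**}(t),\qquad 0<t\le 1.
\end{equation*}
Indeed, the norm defining $\mathcal{GL}^{\theta}_{p,q}(\Omega)$ is obtained from the norm defining $GN^{\theta}_{p,q}(M)$ by literally substituting $f^{**}(t)$ for $\bar f(t,M)$. In each of the regimes (supremum or infimum over $\varepsilon$, integral or supremum in $t$) the defining expression is monotone and positively homogeneous of degree one in the nonnegative factor it contains, so a two–sided bound $c_1 f^{**}(t)\le\bar f(t,M^*)\le c_2 f^{**}(t)$ transfers verbatim to $\|f\|_{GN^{\theta}_{p,q}(M^*)}\asymp\|f\|_{\mathcal{GL}^{\theta}_{p,q}(\Omega)}$. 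Combining this with Remark~\ref{r5}, which gives $\|f\|_{\mathcal{GL}^{\theta}_{p,q}(\Omega)}\asymp\|f\|_{GL^{\theta}_{p,q}(\Omega)}$, will yield the asserted equivalence $\|f\|_{GN^{\theta}_{p,q}(M^*)}\asymp\|f\|_{GL^{\theta}_{p,q}(\Omega)}$.

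The upper bound $\bar f(t,M^*)\le f^{**}(t)$ is the routine half. For any $\omega$ with $|\omega|=s>t$ the Hardy--Littlewood inequality gives $\tfrac1s\bigl|\int_\omega f\bigr|\le\tfrac1s\int_\omega|f|\le\tfrac1s\int_0^s f^*(u)\,du=f^{**}(s)$, and since $f^{**}$ is non-increasing and $s>t$ this is at most $f^{**}(t)$. Taking the supremum over admissible $\omega$ finishes this direction.

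The lower bound is where I expect the real difficulty, and the obstruction is sign cancellation: the extremal set $\{|f|>f^*(t)\}$ realizing $f^{**}(t)$ may be split by the sign of $f$ into pieces each of measure $\le t$, so it need not be admissible in the net, which requires $|\omega|>t$. The plan is to avoid this by using, for each $s>t$, the two sets $\omega_{+}$ and $\omega_{-}$ of measure exactly $s$ that maximize $\int_\omega f$ and $\int_\omega(-f)$ respectively; both are admissible. Writing $A_s$ for the set carrying the largest $s$ values of $|f|$, so that $\int_{A_s}|f|=s\,f^{**}(s)$, one notes that the positive part of $A_s$ lies inside $\omega_{+}$ and its negative part inside $\omega_{-}$, whence
\begin{equation*}
s\,f^{**}(s)=\int_{A_s}|f|\le\int_{\omega_{+}}f+\int_{\omega_{-}}(-f).
\end{equation*}
Consequently the larger of the two averages $\tfrac1s\bigl|\int_{\omega_{\pm}}f\bigr|$ is at least $\tfrac12 f^{**}(s)$, so $\bar f(t,M^*)\ge\tfrac12\sup_{s>t}f^{**}(s)=\tfrac12 f^{**}(t)$ by the monotonicity and right-continuity of $f^{**}$. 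This establishes the pointwise equivalence with constants $c_1=\tfrac12$ and $c_2=1$.

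It then remains only to record where the hypotheses enter. The pointwise comparison above is valid for every $t$ and is entirely independent of $p,q,\theta$, so the restrictions $1<p,q<\infty$ are needed solely to justify Remark~\ref{r5}, where Hardy's inequality is applied with the finite constant $c_{p'}$ (requiring $p>1$) to pass between $f^*$ and $f^{**}$. Assembling the three comparisons in the order (pointwise lemma) $\to$ (monotone, homogeneous transfer to $\mathcal{GL}^{\theta}_{p,q}$) $\to$ (Remark~\ref{r5}) completes the argument.
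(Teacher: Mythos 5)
Your overall strategy (a pointwise comparison between $\bar f(\cdot,M^*)$ and $f^{**}$, transferred through the monotone, homogeneous norm expressions, then combined with Remark~\ref{r5}) is exactly the paper's strategy. The upper bound $\bar f(t,M^*)\le f^{**}(t)$ is fine. But the lower bound you claim, $\bar f(t,M^*)\ge\tfrac12 f^{**}(t)$ at the \emph{same} value of $t$, is false, and the step that produces it is where the argument breaks. The containments $A_s\cap\{f>0\}\subset\omega_{+}$ and $A_s\cap\{f<0\}\subset\omega_{-}$ do not yield
$\int_{A_s}|f|\le\int_{\omega_{+}}f+\int_{\omega_{-}}(-f)$,
because $\omega_{+}$ has measure exactly $s$ and may be forced to contain a large portion of $\{f<0\}$, whose contribution is subtracted rather than discarded. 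Concretely, take $\Omega=[0,1]$ and $f=\chi_{[0,1/2]}-\chi_{[1/2,1]}$. Then $f^{**}(t)=1$ for all $t\in(0,1]$, while for $t\ge\tfrac12$ any admissible $\omega$ with $|\omega|=s>t$ satisfies $\bigl|\int_\omega f\bigr|\le 1-s$, so that $\bar f(t,M^*)=\tfrac{1-t}{t}\to 0$ as $t\to1^{-}$. At $t=\tfrac34$ the ratio $f^{**}(t)/\bar f(t,M^*)$ already equals $3>2$, and it is unbounded as $t\to1$; in this example $\int_{\omega_{+}}f+\int_{\omega_{-}}(-f)=2(1-s)$, which is far smaller than $s\,f^{**}(s)=s$.

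The repair is precisely the point of the inequality the paper invokes from \cite{nursultanov2011net} and \cite{nursultanov1998net}: the lower bound holds only after a dilation of the argument, namely $f^{**}(t)\le 4\,\bar f(t/3,M^*)$. The shift $t\mapsto t/3$ is what lets one capture the positive (or negative) part of the extremal set by an admissible set of much smaller measure, avoiding the cancellation forced on sets of measure $>t$. You would then also need to check that this \emph{shifted} pointwise bound still transfers to the norms; it does, by the change of variables $t\mapsto 3t$ in $\int_0^1\bigl(t^{1/p+\varepsilon}\,\cdot\,\bigr)^q\,\tfrac{dt}{t}$, which costs only the factor $3^{(1/p+\varepsilon)q}\le 3^{(1/p+1)q}$, uniformly in $\varepsilon\in(0,1]$ — a step your ``monotone, homogeneous transfer'' does not cover as written, since it presupposes a comparison at the same $t$. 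With the dilated inequality in place, the rest of your outline (transfer to $\mathcal{GL}^{\theta}_{p,q}$, then Remark~\ref{r5}, which is where $p>1$ enters via Hardy's inequality) matches the paper's proof.
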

\begin{equation}\label{e1}
GN^{\theta}_{p,q}(M^*)={G{L}}^{\theta}_{p,q}(\Omega).
\end{equation}
\begin{proof}
By using the following inequality proved in \cite{nursultanov2011net} (and \cite{nursultanov1998net}) for the net $M^*$, 
\begin{equation*}
\bar{f}(t,M^*)\le f^{**}(t)\le 4\bar{f}(t/3,M^*),
\end{equation*}
we have $GN^{\theta}_{p,q}(M^*)=\mathcal{GL}^{\theta}_{p,q}(\Omega)$. Now, the result \eqref{e1} follows from Remark \ref{r5}.
\end{proof}
\section{Real Interpolation results}
In this section, we discuss the real interpolation results of grand net space, see \cite{bennett1988interpolation} for details. 

Let $(X_0,X_1)$ be a compatible pair of Banach spaces and the associated $K-$ functional is defined as:
\begin{equation*}
K(t,f;X_0,X_1)=\inf\limits_{f=f_0+f_1}\left(\|f_0\|_{X_0}+t\|f_1\|_{X_1}\right).
\end{equation*}
For $1\le q<\infty$ and $0<\eta<1$ we have
\begin{equation*}
(X_0,X_1)_{\eta,q}=\left\{f\in X_0+X_1: \|f\|_{(X_0,X_1)_{\eta,q}}=\left(\int\limits_{0}^{\infty}(t^{-\eta}K(t,f))^{q}\frac{dt}{t}\right)^{1/q}<\infty\right\}.
\end{equation*}
\begin{theorem}
Let $\theta>0$, $0<p_0<p_1<\infty$ and $0<q<\infty$. Then 
\begin{equation*}
\left(GN^{\theta}_{p_0,q_0}(M),GN^{\theta}_{p_1,q_1}(M)\right)_{\eta,q}\hookrightarrow GN^{\theta}_{p,q}(M),   
\end{equation*}
where $\frac{1}{p}=\frac{1-\eta}{p_0}+\frac{\eta}{p_1}$ and $0<\eta<1$.
\end{theorem}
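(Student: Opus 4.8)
The goal is to establish the estimate $\|f\|_{GN^{\theta}_{p,q}(M)}\lesssim \|f\|_{(X_0,X_1)_{\eta,q}}$, where $X_i=GN^{\theta}_{p_i,q_i}(M)$; that is, I would bound the target grand net quasi-norm from above by the interpolation quasi-norm, which amounts to producing a good lower bound for the $K$-functional $K(\tau,f;X_0,X_1)$. The two structural facts I would use are the subadditivity $\bar{f}(t,M)\le\bar{f_0}(t,M)+\bar{f_1}(t,M)$ for any decomposition $f=f_0+f_1$ (immediate from $\big|\int_\omega(f_0+f_1)\big|\le\big|\int_\omega f_0\big|+\big|\int_\omega f_1\big|$ and subadditivity of the supremum) together with the monotonicity of $t\mapsto\bar{g}(t,M)$.

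First, for a fixed $\varepsilon\in(0,1]$ and a threshold $t_0\in(0,1)$, I would discard the supremum over the grand parameter in the definition of $\|g\|_{X_i}$, keep only the single term at $\varepsilon$, restrict the inner integral to $(0,t_0)$, and use $\bar{g}(t,M)\ge\bar{g}(t_0,M)$ there. Since $\tfrac{1}{p_i}+\varepsilon>0$, the elementary integral $\int_0^{t_0}t^{(1/p_i+\varepsilon)q_i-1}\,dt$ is controlled uniformly in $\varepsilon\in(0,1]$ (so $q_0,q_1$ enter only through absolute constants), giving $\|g\|_{X_i}\gtrsim\varepsilon^{\theta}t_0^{1/p_i+\varepsilon}\bar{g}(t_0,M)$ with a constant independent of $\varepsilon$ and $t_0$. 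Applying this to $g=f_0$ and $g=f_1$, combining with subadditivity, and taking the infimum over all decompositions yields the $K$-functional bound
\[
K(\tau,f;X_0,X_1)\gtrsim \varepsilon^{\theta}\,t_0^{\varepsilon}\,\min\!\big(t_0^{1/p_0},\ \tau\,t_0^{1/p_1}\big)\,\bar{f}(t_0,M),
\]
valid for every $\varepsilon\in(0,1]$, $t_0\in(0,1)$ and $\tau>0$.

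The next step is a Holmstedt-type balancing: put $\beta=(\tfrac{1}{p_0}-\tfrac{1}{p_1})^{-1}>0$ and choose $t_0=\tau^{\beta}$, so that the minimum above equals $t_0^{1/p_0}$. Inserting this bound into $\|f\|_{(X_0,X_1)_{\eta,q}}^q=\int_0^\infty(\tau^{-\eta}K(\tau,f))^q\,\tfrac{d\tau}{\tau}$, restricting the $\tau$-integral to $(0,1)$, and changing variables $t=\tau^{\beta}$, the exponent bookkeeping $-\tfrac{\eta}{\beta}+\tfrac{1}{p_0}=\tfrac{1}{p}$ (which is exactly the hypothesis $\tfrac1p=\tfrac{1-\eta}{p_0}+\tfrac{\eta}{p_1}$) converts the weight $\tau^{-\eta}t_0^{1/p_0}$ into $t^{1/p}$ and the factor $t_0^{\varepsilon}$ into $t^{\varepsilon}$. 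This produces, for every fixed $\varepsilon\in(0,1]$, the estimate $\|f\|_{(X_0,X_1)_{\eta,q}}^q\gtrsim \varepsilon^{\theta q}\int_0^1\big(t^{1/p+\varepsilon}\bar{f}(t,M)\big)^q\,\tfrac{dt}{t}$.

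The crucial final point, which I expect to be the main obstacle, is the handling of the grand supremum over $\varepsilon$. Because the inequality in the previous step holds for every $\varepsilon\in(0,1]$ with one and the same left-hand side, I would take the supremum over $\varepsilon$ only at the very end, which reconstructs precisely $\|f\|_{GN^{\theta}_{p,q}(M)}^q$ and closes the argument. It is essential not to optimize $\varepsilon$ prematurely (for instance by reducing the target to the logarithmically weighted net norm of \eqref{L6}): such a reduction is lossy and in fact diverges near $t=1$ when $\theta q\ge1$, whereas carrying $\varepsilon$ as a free parameter throughout and exchanging the supremum with the integration only at the last step avoids this entirely. The remaining technical care is the uniformity of all implied constants in $\varepsilon$, $t_0$ and $\tau$ — in particular that $\varepsilon\in(0,1]$ keeps $\tfrac1{p_i}+\varepsilon$ bounded away from $0$ and from above — which is routine given the explicit integrals.
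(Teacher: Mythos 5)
Your argument is correct and follows essentially the same route as the paper's proof: subadditivity of $\bar f(\cdot,M)$ over decompositions $f=f_0+f_1$, a weak-type lower bound for each component norm at a single fixed grand parameter $\varepsilon$, a Holmstedt-type substitution $t=\tau^{\beta}$ with $\beta=(\tfrac{1}{p_0}-\tfrac{1}{p_1})^{-1}$ driven by $\tfrac1p=\tfrac{1-\eta}{p_0}+\tfrac{\eta}{p_1}$, and deferral of the supremum over $\varepsilon$ to the very end. The only cosmetic difference is that you lower-bound the $K$-functional of the grand pair directly, whereas the paper works with the $\varepsilon$-shifted classical net spaces $N_{p_i(\varepsilon),q_i}$ and passes to the grand norms in the last step via $\sup\inf\le\inf\sup$; both versions rest on the same uniformity in $\varepsilon$ that you make explicit.
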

\begin{proof}
Let $f\in GN^{\theta}_{p,q}(M)$ and $f=f_0+f_1$ an arbitrary representation of $f$, where $f_i\in GN^{\theta}_{p_i,q_i}(M)$ for $i=0,1$. Clearly, 
\begin{equation*}
  \bar{f}(t,M)\le\bar{f_0}(t,M)+\bar{f_1}(t,M).
\end{equation*}
Taking into account that $\varepsilon<\frac{1}{p_0} ,\frac{1}{p_1}$ and $N_{p_i,q_i}\hookrightarrow N_{p_i,\infty}$ we obtain
\begin{align*}
\bar{f}(t,M)&\le\bar{f_0}(t,M)+\bar{f_1}(t,M)\\
&\le t^{-\frac{1}{p_0}+\varepsilon}\|f_0\|_{N_{p_0(\varepsilon),\infty}(M)}+t^{-\frac{1}{p_1}+\varepsilon}\|f_1\|_{N_{p_1(\varepsilon),\infty}(M)}\\
&\le t^{-\frac{1}{p_0(\varepsilon)}}\|f_0\|_{N_{p_0(\varepsilon),\infty}(M)}+t^{-\frac{1}{p_1(\varepsilon)}}\|f_1\|_{N_{p_1(\varepsilon),\infty}(M)}\\
&\le t^{-\frac{1}{p_0(\varepsilon)}}\|f_0\|_{N_{p_0(\varepsilon),q_0}(M)}+t^{-\frac{1}{p_1(\varepsilon)}}\|f_1\|_{N_{p_1(\varepsilon),q_1}(M)},
\end{align*}
where $\frac{1}{p_i(\varepsilon)}=\frac{1}{p_i}+\varepsilon$. Since the representation $f=f_0+f_1$ is arbitrary, it yields
\begin{align*}
\bar{f}(t,M)&\le c t^{\frac{-1}{p_0(\varepsilon)}}\inf\limits_{f=f_0+f_1}\left(\|f_0\|_{N_{p_0(\varepsilon)},q_0}+t^{\frac{1}{p_0(\varepsilon)}-\frac{1}{p_{1}(\varepsilon)}}\|f_1\|_{N_{p_1(\varepsilon)},q_1}\right)\\
&= t^{\frac{-1}{p_0(\varepsilon)}}K(t^{\frac{1}{p_0(\varepsilon)}-\frac{1}{p_{1}(\varepsilon)}},f).
\end{align*}
Hence, we have
\begin{align*}
 \|f\|_{GN^{\theta}_{p,q}(M)}&=\sup\limits_{\varepsilon>0}{\varepsilon}^{\theta}\|f\|_{N_{p(\varepsilon),q}(M)}\\
  &=\sup\limits_{\varepsilon>0}{\varepsilon}^{\theta}\left(\int_{0}^{1}(t^{\frac{1}{p(\varepsilon)}}\bar{f}(t,M))^{q}\frac{dt}{t}\right)^{1/q}\\
  &\le \sup\limits_{\varepsilon>0}{\varepsilon}^{\theta}\left(\int_{0}^{1}(t^{\frac{1}{p(\varepsilon)}-\frac{1}{p_0(\varepsilon)}}K(t^{\frac{1}{p_0(\varepsilon)}-\frac{1}{p_{1}(\varepsilon)}},f))^{q}\frac{dt}{t}\right)^{1/q}\\
   &\le \sup\limits_{\varepsilon>0}{\varepsilon}^{\theta}\left(\int_{0}^{1}(t^{\frac{1}{p}-\frac{1}{p_0}}K(t^{\frac{1}{p_0}-\frac{1}{p_{1}}},f))^{q}\frac{dt}{t}\right)^{1/q}\\
     &\le \left(\frac{1}{p_0}-\frac{1}{p_1}\right)^{-1/q} \sup\limits_{\varepsilon>0}\left({\varepsilon}^{\theta}\int_{0}^{1}(t^{-\eta} K(t,f))^{q}\frac{dt}{t}\right)^{1/q}\\
      &\le \left(\frac{1}{p_0}-\frac{1}{p_1}\right)^{-1/q} \left(\int_{0}^{1}\left(t^{-\eta } \sup\limits_{\varepsilon>0}{\varepsilon}^{\theta}\inf\limits_{f=f_0+f_1}\left(\|f_0\|_{{N_{p_0(\varepsilon)},q_0}(M)}+\right.\right.\right.\\
      &\indent\left.\left.\left.+t\|f_1\|_{N_{p_1(\varepsilon)},q_1(M)}\right)\right)^{q}\frac{dt}{t}\right)^{1/q}\\ 
       &\le \left(\frac{1}{p_0}-\frac{1}{p_1}\right)^{-1/q} \left(\int_{0}^{1}\left(t^{-\eta } \inf\limits_{f=f_0+f_1}\left(\|f_0\|_{{GN}_{{p_0},q_0(M)}^{\theta}}+t\|f_1\|_{{GN}_{{p_1},q_1(M)}^{\theta}}\right)\right)^{q}\frac{dt}{t}\right)^{1/q}\\ 
       &\le c \|f\|_{({GN}^{\theta}_{p_0,q_0}(M), {GN}^{\theta}_{p_1,q_1}(M))_{\eta,q}}.
\end{align*}
\end{proof}
The following corollary is an immediate consequence of the latter theorem.
\begin{corollary}
Let $X_0$ and $X_1$ be a compatible pair and $1<p_0<p_1<\infty$. Let $T$ be quasi-linear operator such that 
\begin{equation*}
    T:X_0\to GN_{p_0,\infty}^{\theta}(M)\text{ with the norm } A_0
\end{equation*}
\begin{equation*}
    T:X_1\to GN_{p_1,\infty}^{\theta}(M)\text{ with the norm } A_1.
\end{equation*}
Then $T:(X_0,X_1)_{\eta,q}\to GN^{\theta}_{p,q}(M)$ with the norm $\|T\|\le c A_0^{1-\theta}A_1^{\theta}$.
\end{corollary}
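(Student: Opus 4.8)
The plan is to obtain this corollary as a direct application of the preceding interpolation theorem combined with the standard Marcinkiewicz-type reduction for quasi-linear operators. First I would invoke the stated hypotheses, namely that $T$ maps $X_0$ boundedly into $GN^{\theta}_{p_0,\infty}(M)$ with norm $A_0$ and $X_1$ boundedly into $GN^{\theta}_{p_1,\infty}(M)$ with norm $A_1$. The key point is that the target spaces here have second index $\infty$, so one works with the extreme endpoints of the net scale; the interpolation theorem above is stated for matching second indices, so I would either apply it with $q_0=q_1=\infty$ or, more robustly, estimate $K(t,f;X_0,X_1)$ directly and feed the resulting pointwise bound on $\overline{Tf}(t,M)$ into the grand net norm.

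The main steps, in order, would be: (1) fix $f\in(X_0,X_1)_{\eta,q}$ and an arbitrary decomposition $f=f_0+f_1$ with $f_i\in X_i$; (2) using quasi-linearity, bound $\overline{Tf}(t,M)\le c\bigl(\overline{Tf_0}(t,M)+\overline{Tf_1}(t,M)\bigr)$ and then apply the endpoint bounds to get, for the perturbed exponent $p_i(\varepsilon)$ with $\tfrac{1}{p_i(\varepsilon)}=\tfrac{1}{p_i}+\varepsilon$, the estimate $\overline{Tf_i}(t,M)\le A_i\,t^{-1/p_i(\varepsilon)}\varepsilon^{-\theta}\|f_i\|_{X_i}$ coming from the definition of the $GN^{\theta}_{p_i,\infty}$ norm; (3) optimize over the decomposition to produce the $K$-functional $K(t^{1/p_0(\varepsilon)-1/p_1(\varepsilon)},f;X_0,X_1)$, exactly mirroring the computation in the theorem above; (4) insert this into the grand net norm $\|Tf\|_{GN^{\theta}_{p,q}(M)}=\sup_{\varepsilon}\varepsilon^{\theta}\|Tf\|_{N_{p(\varepsilon),q}(M)}$, perform the change of variables $u=t^{1/p_0-1/p_1}$ as in the theorem's proof, and recognize the resulting integral as $\|f\|_{(X_0,X_1)_{\eta,q}}$ with the weight $t^{-\eta}$ where $\eta$ satisfies $\tfrac{1}{p}=\tfrac{1-\eta}{p_0}+\tfrac{\eta}{p_1}$; (5) track the constants $A_0$ and $A_1$ through the optimization to land on the asserted bound $\|T\|\le cA_0^{1-\eta}A_1^{\eta}$.

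I expect the main obstacle to be the bookkeeping of the two factors $A_0$ and $A_1$ so that the geometric-mean dependence $A_0^{1-\eta}A_1^{\eta}$ emerges with the correct exponents. The cleanest route is to rescale the decomposition: replacing $f_1$ by an optimally chosen multiple absorbs the ratio $A_0/A_1$ into the splitting parameter of the $K$-functional, which is the classical device that converts the two separate operator norms into their weighted product. Concretely, one writes $K(t,Tf)\le c\,A_0\,K\bigl((A_1/A_0)t,f;X_0,X_1\bigr)$ up to the exponent shift, and then the homogeneity of the $(X_0,X_1)_{\eta,q}$ norm under dilation of the $K$-functional argument yields the factor $(A_1/A_0)^{\eta}$, so that the overall constant becomes $A_0^{1-\eta}A_1^{\eta}$. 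I would also flag a minor caveat: the statement as written has $A_0^{1-\theta}A_1^{\theta}$, but the exponent governing the interpolation is $\eta$, not $\theta$; assuming the intended reading is $A_0^{1-\eta}A_1^{\eta}$, the argument above delivers it, and the grand parameter $\theta$ plays no role in the exponents because the $\varepsilon^{\theta}$ weight is common to both endpoints and factors through the supremum uniformly.
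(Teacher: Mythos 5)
Your argument is correct in substance, and it is worth noting that the paper itself gives no proof of this corollary: it is dispatched as an ``immediate consequence'' of the preceding embedding theorem, the implicit route being the standard factorization $T:(X_0,X_1)_{\eta,q}\to\bigl(GN^{\theta}_{p_0,\infty}(M),GN^{\theta}_{p_1,\infty}(M)\bigr)_{\eta,q}\hookrightarrow GN^{\theta}_{p,q}(M)$, where the first arrow carries the norm $cA_0^{1-\eta}A_1^{\eta}$ by the exactness of exponent $\eta$ of the real method and the second is the theorem applied with $q_0=q_1=\infty$. You instead inline the theorem's $K$-functional computation with $T$ inserted, which is more self-contained and has two genuine advantages: the cancellation of the $\varepsilon^{\pm\theta}$ factors and the $\varepsilon$-independence of the exponent differences $\tfrac{1}{p(\varepsilon)}-\tfrac{1}{p_0(\varepsilon)}=\tfrac{1}{p}-\tfrac{1}{p_0}$ become explicit, and the dilation trick $K(t,Tf)\le cA_0K((A_1/A_0)t,f)$ makes the provenance of the geometric mean transparent rather than hidden in the abstract interpolation property. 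Your flag that the exponent in the stated norm bound should be $\eta$ rather than $\theta$ is a correct catch of a typo in the statement. One caveat you share with the paper rather than introduce yourself: for a genuinely quasi-linear (non-linear) $T$, the pointwise bound $|Tf|\le c(|Tf_0|+|Tf_1|)$ does not immediately yield $\overline{Tf}(t,M)\le c\bigl(\overline{Tf_0}(t,M)+\overline{Tf_1}(t,M)\bigr)$, since the net average involves $\bigl|\int_{\omega}Tf\bigr|$ rather than $\int_{\omega}|Tf|$; this step is clean for linear $T$, and for the quasi-linear case one should either assume the splitting at the level of the averaged functions or note that the net average is not monotone under pointwise domination.
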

\section{Integral Operators on Grand Net spaces}
In this section, we establish necessary and sufficient conditions for the boundedness of an integral operator acting from a Banach space of functions into the grand net spaces. Similar criteria for net spaces were previously proved in \cite[Section 4]{nursultanov2011net}.

Let $X(\Omega)$ be any Banach space of measurable functions defined on $\Omega$ endowed with a Lebesgue measure such that the space of compactly supported functions is dense in $X(\Omega)$. For brevity, we shall use $X$ instead of $X(\Omega)$. The associate space of $X$ is denoted by $X^{*}$ and is defined as:
\begin{equation}
X^{*}=\left\{h:\|h\|_{X^*}=\sup_{\|f\|_X\le1}\left|\int\limits_{\Omega}h(x)f(x)dx\right|<\infty\right\}.
\end{equation}
\begin{theorem}\label{t3}
Let $\theta\ge0$ and $1< q\le\infty$. Let $M$ be any net as defined in Section 1.  Let the integral operator 
\begin{equation}\label{e2}
Tf(y)=\int\limits_{\Omega}K(x,y)f(x)dx
\end{equation}
act from $X$ into $GN_{q,\infty}^{\theta}(M)$.
Then the necessary and sufficient condition for $T$ to be bounded from $X$ into $GN_{q,\infty}^{\theta}(M)$ is
\begin{equation}
\sup\limits_{0<t<1}\sup_{\omega\in M, |\omega|>t}\frac{t^{1/q}}{|ln(t)|^{\theta}}\left\|\frac{1}{|\omega|}\int\limits_{\omega}K(\cdot,y)dy\right\|_{X^{*}}<\infty.
\end{equation}
\end{theorem}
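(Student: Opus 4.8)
The plan is to reduce everything to the single-scale formula from Theorem \ref{t1} and then exploit the duality built into the associate space $X^*$. Write $h_\omega(x)=\frac{1}{|\omega|}\int_\omega K(x,y)\,dy$ for $\omega\in M$, so that the quantity appearing in the hypothesis is precisely $\|h_\omega\|_{X^*}$. By Fubini's theorem — legitimate because it suffices to test on compactly supported $f$ (dense in $X$) and $Tf$ is assumed to lie in $GN^{\theta}_{q,\infty}(M)$, so the relevant integrals converge absolutely — for every admissible $\omega$ one has
\[
\frac{1}{|\omega|}\int_\omega Tf(y)\,dy=\int_\Omega f(x)h_\omega(x)\,dx.
\]
Since $|\Omega|=1$ forces $\bar{g}(t,M)=0$ for $t\ge 1$, Theorem \ref{t1} (together with the defining formula of $N_{q,\infty}(M)$ in the endpoint case $\theta=0$, and the analogue obtained from the same optimization $u(\varepsilon)=\varepsilon^\theta t^{\varepsilon}$ in its proof when $q=\infty$) gives the working identity
\[
\|Tf\|_{GN^{\theta}_{q,\infty}(M)}\asymp\sup_{0<t<1}\frac{t^{1/q}}{|\ln t|^{\theta}}\,\overline{Tf}(t,M)=\sup_{0<t<1}\frac{t^{1/q}}{|\ln t|^{\theta}}\sup_{\substack{\omega\in M\\ |\omega|>t}}\left|\int_\Omega f(x)h_\omega(x)\,dx\right|.
\]

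For sufficiency I would assume the displayed supremum $C$ in the statement is finite. Bounding $\left|\int_\Omega f\,h_\omega\right|\le\|f\|_X\|h_\omega\|_{X^*}$ and substituting into the working identity, the factor $\frac{t^{1/q}}{|\ln t|^{\theta}}\|h_\omega\|_{X^*}$ is dominated by $C$ uniformly in $t$ and $\omega$; this yields $\|Tf\|_{GN^{\theta}_{q,\infty}(M)}\le cC\|f\|_X$ and hence boundedness of $T$.

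For necessity I would argue in the reverse direction. Fix $t\in(0,1)$ and $\omega\in M$ with $|\omega|>t$. By the definition of $\|h_\omega\|_{X^*}$ as a supremum over the unit ball of $X$, and using the density of compactly supported functions in $X$, for any $\delta>0$ I can select $f$ with $\|f\|_X\le1$ such that $\left|\int_\Omega f\,h_\omega\right|\ge(1-\delta)\|h_\omega\|_{X^*}$. Because this particular $\omega$ is admissible in the inner supremum defining $\overline{Tf}(t,M)$, the working identity forces
\[
\|Tf\|_{GN^{\theta}_{q,\infty}(M)}\ge c^{-1}\frac{t^{1/q}}{|\ln t|^{\theta}}(1-\delta)\|h_\omega\|_{X^*}.
\]
Combining this with $\|Tf\|_{GN^{\theta}_{q,\infty}(M)}\le\|T\|\,\|f\|_X\le\|T\|$, letting $\delta\to0$, and taking the supremum over all admissible $t$ and $\omega$ gives $C\le c\|T\|<\infty$.

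The genuinely delicate points are not the algebra but the measure-theoretic bookkeeping: justifying the Fubini interchange uniformly in $\omega$ (so that each $h_\omega$ is a well-defined element of $X^*$), and, in the necessity step, guaranteeing that the near-extremal $f$ can be taken inside $X$ — this is exactly where the standing hypothesis that compactly supported functions are dense in $X$ is used. A secondary point is to keep track of the equivalence constants from Theorem \ref{t1}, since they enter both directions; but as they are absolute (independent of $f$, $t$, and $\omega$), they merely contribute to the implicit constant $c$ and do not affect the finiteness conclusion.
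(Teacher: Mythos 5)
Your proposal is correct and follows essentially the same route as the paper: reduce the $GN^{\theta}_{q,\infty}$ norm to the single-scale expression $\sup_{t}\frac{t^{1/q}}{|\ln t|^{\theta}}\overline{Tf}(t,M)$ via Theorem \ref{t1}, apply Fubini, and identify $\sup_{\|f\|_X\le 1}\left|\int_\Omega f\,h_\omega\right|$ with $\|h_\omega\|_{X^*}$; the paper simply compresses your two directions into one chain of interchanged suprema. Your explicit handling of the near-extremal $f$ in the necessity step and of the $p=\infty$ optimization is a slightly more careful unpacking of the same argument, not a different method.
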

\begin{proof}
    From the definition of $X^*$ and Theorem \ref{t1}(a), we have the following estimates:
    \begin{align*}
    \|T\|_{X\to GN_{q,\infty}^{\theta}(M)}&=\sup_{\|f\|_X\le1}\|Tf\|_{GN_{q,\infty}^{\theta}(M)}\\
    &\asymp \sup_{\|f\|_X\le1}\sup_{t>0}\frac{t^{\frac{1}{q}}}{|\ln{t}|^{\theta}}\overline{Tf}(t,M),\\
    &=\sup_{\|f\|_X\le1}\sup_{t>0}\frac{t^{\frac{1}{q}}}{|\ln{t}|^{\theta}} \sup_{\omega\in M, |\omega|>t}\frac{1}{|\omega|}\left|\int\limits_{\omega}Tf(y)dy\right|\\
    &=\sup_{\|f\|_X\le1}\sup_{t>0}\frac{t^{\frac{1}{q}}}{|\ln{t}|^{\theta}} \sup_{\omega\in M, |\omega|>t}\frac{1}{|\omega|}\left|\int\limits_{\omega}\int\limits_{\mathbb{R}^n}K(x,y)f(x)dxdy\right|\\
     &=\sup_{t>0}\sup_{\omega\in M, |\omega|>t} \frac{t^{\frac{1}{q}}}{|\ln{t}|^{\theta}} \sup_{\|f\|_X\le1}\left|\int\limits_{\mathbb{R}^n}f(x)\frac{1}{|\omega|}\int\limits_{\omega}K(x,y)dydx\right|\\
      &=\sup_{t>0}\sup_{\omega\in M, |\omega|>t} \frac{t^{\frac{1}{q}}}{|\ln{t}|^{\theta}} \left\|\frac{1}{|\omega|}\int\limits_{\omega}K(\cdot,y)dy\right\|_{X^{*}}    
    \end{align*}
\end{proof}
\begin{remark}
The result for case $\theta=0$, that is, net spaces, was proved in \cite{nursultanov2011net}.
\end{remark}
The following corollary presents the criteria for the integral operator to be $(p,q)$-quasi weak type operator:
\begin{corollary}
    Let $1<p,q\le\infty$ and $\theta_1,\theta_2\ge0$. Let $M_1=\{\omega\subset\Omega_1: 0<|\omega|<\infty\}$, $M_2=\{e\subset\Omega_2: 0<|e|<\infty\}$. Then the sufficient condition for the operator \eqref{e2} to be $(p,q)$-quasi weak type operator in Grand Lorentz spaces i.e. $T:{GL^{\theta_1}_{p,1}(\Omega_1)\to GL^{\theta_2}_{q,\infty}(\Omega_2)}$  is bounded if,
    \begin{equation}
    \sup\limits_{\substack{w\in M_1\\ t_1>0\\|w|>t_1}}\sup\limits_{\substack{e\in M_2\\ t_2>0\\|e|>t_2}} \frac{t_2^{1/q}}{|\ln t_2|^{\theta_2}}{t_1^{1/p'}}{|\ln t_1|^{\theta_1}}\left|\frac{1}{|w|}\frac{1}{|e|}\int\limits_{w}\int\limits_{e}K(x,y)dydx\right|<\infty.    
    \end{equation}
\end{corollary}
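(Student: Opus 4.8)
The plan is to collapse this two–variable statement into a \emph{single} application of Theorem~\ref{t3}, letting the domain space play the role of $X$ and letting its associate space $X^{*}$ absorb the $M_1$–averages through the duality already encoded in the Hölder inequality~\eqref{L4}. First I would pass from the grand Lorentz target to a grand net target: since $M_2$ is the family of all finite–measure subsets of $\Omega_2$, the identification $GN^{\theta_2}_{q,\infty}(M_2)=GL^{\theta_2}_{q,\infty}(\Omega_2)$ from~\eqref{e1} shows that $T\colon GL^{\theta_1}_{p,1}(\Omega_1)\to GL^{\theta_2}_{q,\infty}(\Omega_2)$ is the same statement as $T\colon X\to GN^{\theta_2}_{q,\infty}(M_2)$ with $X=GL^{\theta_1}_{p,1}(\Omega_1)$. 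Applying the sufficiency direction of Theorem~\ref{t3} (with $\theta=\theta_2$ and the net $M_2$), boundedness follows once
\begin{equation*}
\sup_{0<t_2<1}\ \sup_{e\in M_2,\,|e|>t_2}\frac{t_2^{1/q}}{|\ln t_2|^{\theta_2}}\left\|\frac{1}{|e|}\int_e K(\cdot,y)\,dy\right\|_{X^{*}}<\infty .
\end{equation*}

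The heart of the argument is estimating $\|h\|_{X^{*}}$ for $h(x)=\frac{1}{|e|}\int_e K(x,y)\,dy$. Writing $X=GN^{\theta_1}_{p,1}(M_1)$ via~\eqref{e1} once more, I would chain three ingredients: the Hardy--Littlewood inequality $\left|\int_{\Omega_1}hf\,dx\right|\le\int_0^1 h^{*}(s)f^{*}(s)\,ds$ (using $|\Omega_1|=1$); the bounds $f^{*}\le f^{**}$ together with the net equivalence $\bar f(t,M_1)\le f^{**}(t)\le 4\bar f(t/3,M_1)$ that holds for $M_1=M^{*}$; and the Hölder-type inequality~\eqref{L4} with $p_1=p,\ p_2=p',\ s_1=1,\ s_2=\infty$. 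Combining them gives
\begin{equation*}
\left|\int_{\Omega_1}h(x)f(x)\,dx\right|\lesssim\int_0^1\bar h(t,M_1)\,\bar f(t,M_1)\,dt\le\|f\|_{GN^{\theta_1}_{p,1}(M_1)}\,\|h\|_{GN^{-\theta_1}_{p',\infty}(M_1)},
\end{equation*}
so that, taking the supremum over $\|f\|_X\le1$, I obtain $\|h\|_{X^{*}}\lesssim\|h\|_{GN^{-\theta_1}_{p',\infty}(M_1)}$.

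Finally I would make this $M_1$–norm explicit. By the second identity in Theorem~\ref{t1} (with $p\mapsto p'$ and $\theta\mapsto\theta_1$),
\begin{equation*}
\|h\|_{GN^{-\theta_1}_{p',\infty}(M_1)}\asymp\sup_{t_1>0}\ \sup_{w\in M_1,\,|w|>t_1}t_1^{1/p'}|\ln t_1|^{\theta_1}\frac{1}{|w|}\left|\int_w h(x)\,dx\right| .
\end{equation*}
Substituting $h(x)=\frac{1}{|e|}\int_e K(x,y)\,dy$ turns $\frac{1}{|w|}\int_w h$ into the double average $\frac{1}{|w|}\frac{1}{|e|}\int_w\int_e K(x,y)\,dy\,dx$, and inserting this into the condition inherited from Theorem~\ref{t3} reproduces exactly the stated finiteness condition. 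The main obstacle is the middle step: one must be confident that only the \emph{easy} half of the net-space duality is needed, namely the upper bound $\|h\|_{X^{*}}\lesssim\|h\|_{GN^{-\theta_1}_{p',\infty}(M_1)}$ (which matches the fact that we are proving a \emph{sufficient} condition), and that the constants produced by $\bar f(t,M^{*})\asymp f^{**}(t)$ and by the dilation $t\mapsto t/3$ are harmless since they are absorbed into the $\asymp$ and $\lesssim$ relations. The degenerate cases $p=\infty$ or $q=\infty$, where~\eqref{L4} or the conjugacy $p'$ breaks down, would be treated separately using the corresponding supremum formulas.
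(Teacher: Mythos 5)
Your proposal is correct and follows the same skeleton as the paper: identify the target $GL^{\theta_2}_{q,\infty}(\Omega_2)$ with $GN^{\theta_2}_{q,\infty}(M_2)$ via \eqref{e1}, apply Theorem~\ref{t3} with $X=GL^{\theta_1}_{p,1}(\Omega_1)$, and then rewrite the resulting $X^{*}$-norm of $h(x)=\frac{1}{|e|}\int_e K(x,y)\,dy$ as the double supremum over $w\in M_1$ using Theorem~\ref{t1}. The one place you diverge is the middle step: the paper simply invokes the associate-space identity $(GL^{\theta}_{p,1})^{*}=GL^{-\theta}_{p',\infty}$ (stated without proof or reference) and writes the whole chain as a string of equivalences, whereas you derive only the one-sided bound $\|h\|_{X^{*}}\lesssim\|h\|_{GN^{-\theta_1}_{p',\infty}(M_1)}$ from Hardy--Littlewood, the comparison $\bar f(t,M^{*})\le f^{**}(t)\le 4\bar f(t/3,M^{*})$, and the H\"older-type inequality \eqref{L4}. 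Since the corollary only asserts sufficiency, your one-sided estimate is all that is needed, and it is arguably more self-contained than the paper's appeal to the full duality; the price is that you do not recover the two-sided equivalence of operator norms that the paper's computation (implicitly) claims. Your closing remarks about the harmlessness of the dilation constants and the need to treat $p=\infty$ or $q=\infty$ separately are apt, since \eqref{L4} is only stated for finite $p_1,p_2$.
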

\begin{proof}
By using Theorem \ref{t3} and the fact that $(GL_{p,1}^{\theta})^{*}=GL_{p',\infty}^{-\theta}$ we have
\begin{align*}
\|T\|_{GL^{\theta_1}_{p,1}(\Omega_1)\to GL^{\theta_2}_{q,\infty}(\Omega_2)}&\asymp \|T\|_{GL^{\theta_1}_{p,1}(\Omega_1)\to GN^{\theta_2}_{q,\infty}(M_2)}\\
&=\sup\limits_{\substack{e\in M_2\\ t_2>0\\|e|>t_2}}\frac{t_2^{1/q}}{|\ln t_2|^{\theta_2}}\left\|\frac{1}{|e|}\int\limits_{e}K(\cdot,y)dy\right\|_{GL^{-\theta_1}_{p',\infty}(\Omega_1)}\\
&=\sup\limits_{\substack{e\in M_2\\ t_2>0\\|e|>t_2}}\frac{t_2^{1/q}}{|\ln t_2|^{\theta_2}}\left\|\frac{1}{|e|}\int\limits_{e}K(\cdot,y)dy\right\|_{GL^{-\theta_1}_{p',\infty}(\Omega_1)}\\
&=\sup\limits_{\substack{w\in M_1\\ t_1>0\\|w|>t_1}}\sup\limits_{\substack{e\in M_2\\ t_2>0\\|e|>t_2}}\frac{t_2^{1/q}}{|\ln t_2|^{\theta_2}}{t_1^{1/p'}}{|\ln t_1|^{\theta_1}}\times\\
&\indent\times\left|\frac{1}{|w|}\frac{1}{|e|}\int\limits_{w}\int\limits_{e}K(x,y)dydx\right|.
\end{align*}
\end{proof}

\section{Acknowledgement}

The authors express their gratitude to Professor Erlan Nursultanov for his valuable insights during the initial stages of this work. This research was funded by Nazarbayev University under Collaborative Research Program Grant 20122022CRP1601.

\end{document}